\newtheorem{theorem}{Theorem}
\theoremstyle{definition}
\newtheorem{claim}{Claim}
\newtheorem{conjecture}{Conjecture}
\begin{document}
\title
{\bf\Large Rainbow triangles in edge-colored graphs\thanks{Supported by NSFC (No.~11271300) and the Doctorate Foundation of Northwestern
Polytechnical University (No.~cx201202 and No.~cx201326).}}

\date{}
\author{\small Binlong Li, Bo Ning, Chuandong Xu and Shenggui Zhang\thanks{Corresponding author. E-mail address: sgzhang@nwpu.edu.cn} \\[2mm]
\small Department of Applied Mathematics, Northwestern Polytechnical University,\\
\small Xi'an, Shaanxi 710072, P.R.~China}
\maketitle

\begin{abstract}
Let $G$ be an edge-colored graph. The color degree of a vertex $v$ of $G$, is defined as the number of colors of the edges incident to $v$. The color number of $G$ is defined as the number of colors of the edges in $G$. A rainbow triangle is one in which every pair of edges have distinct colors. In this paper we give some sufficient conditions for the existence of rainbow triangles in edge-colored graphs in terms of color degree, color number and edge number. As a corollary, a conjecture proposed by Li and Wang (Color degree and heterochromatic cycles in edge-colored graphs, European J. Combin. 33 (2012) 1958--1964) is confirmed.
\medskip

\noindent {\bf Keywords:} Edge-colored graphs; Color degree; Color number; Rainbow triangles; Directed triangles
\smallskip

\end{abstract}

\section{Introduction}

All graphs considered here are simple and finite. For terminology and notation not defined here, we refer the reader to Bondy and Murty \cite{Bondy_Murty}.

Let $G=(V,E)$ be a graph. We use $e(G)$ to denote the number of edges of $G$. An \emph{edge-coloring} of $G$ is a mapping $C: E\rightarrow \mathbb N$, where $\mathbb N$ is the set of natural numbers. We call $G$ an \emph{edge-colored graph} (or briefly, a {\em colored graph}) if it is assigned such an edge-coloring $C$ and use $C(G)$ to denote the set, and $c(G)$ the number (called the \emph{color number} of $G$), of colors of edges in $G$. For a vertex $v$ of $G$, the \emph{color degree} of $v$ in $G$ with the edge-coloring $C$, denoted by $d^{c}_{G}(v)$ (or briefly, $d^{c}(v)$), is defined as the number of colors of the edges incident to $v$.  A triangle in a colored graph is called \emph{rainbow} if every two of its edges have distinct colors.

In this paper, we mainly study the existence of rainbow triangles in colored graphs. Let $G$ be a colored graph on $n$ vertices. It follows from Tur\'{a}n's theorem that $G$ contains a triangle if $e(G)>\lfloor n^2/4\rfloor$. Thus $G$ contains a rainbow triangle if $c(G)>\lfloor n^2/4\rfloor$. This lower bound is sharp by considering the graph $G=K_{\lceil n/2\rceil,\lfloor n/2\rfloor}$ with edges assigned pairwise distinct colors.

Now we give two nontrivial conditions for the existence of rainbow triangles in colored graphs.

\begin{theorem}\label{th1}
Let $G$ be a colored graph on $n$ vertices. If $e(G)+c(G)\geq n(n+1)/2$, then $G$ contains a rainbow triangle.
\end{theorem}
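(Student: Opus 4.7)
I would proceed by induction on $n$.

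For the base case $n=3$, the hypothesis $e(G)+c(G)\geq 6$, together with the trivial bounds $e(G)\leq 3$ and $c(G)\leq e(G)$, forces $e(G)=c(G)=3$. Hence $G=K_3$ with three pairwise distinct edge colors, which is itself a rainbow triangle.

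For the inductive step, assume the result holds on $n-1$ vertices. Let $G$ on $n$ vertices satisfy $e(G)+c(G)\geq n(n+1)/2$, and suppose for contradiction that $G$ has no rainbow triangle. The strategy is to locate a vertex $v$ with $d(v)+d^c(v)\leq n$. Deleting such a $v$ removes exactly $d(v)$ edges and destroys at most $d^c(v)$ colors (only those colors whose every edge is incident to $v$ can disappear), so
\[
e(G-v)+c(G-v)\;\geq\; e(G)+c(G)-\bigl(d(v)+d^c(v)\bigr)\;\geq\;\frac{n(n+1)}{2}-n\;=\;\frac{(n-1)n}{2}.
\]
Applying the inductive hypothesis to $G-v$ would then produce a rainbow triangle in $G-v\subseteq G$, contradicting our standing assumption.

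Thus the inductive step reduces to the following structural claim, which I expect to be the main obstacle: \emph{every edge-colored graph on $n$ vertices with no rainbow triangle admits a vertex $v$ with $d(v)+d^c(v)\leq n$}. Assuming the contrary, $d(v)+d^c(v)\geq n+1$ at every vertex; since $d^c(v)\leq d(v)$, this forces $\delta(G)\geq\lceil(n+1)/2\rceil$, so $G$ is quite dense, and at a vertex $v_0$ realizing the minimum degree equality is forced, giving $d^c(v_0)=d(v_0)$: all edges incident to $v_0$ carry pairwise distinct colors. Writing $N(v_0)=\{u_1,\ldots,u_d\}$ with $v_0u_i$ of color $c_i$, the no-rainbow-triangle condition on each triangle $v_0u_iu_j$ forces any present edge $u_iu_j$ to carry color $c_i$ or $c_j$. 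This rule induces a natural orientation of $G[N(v_0)]$ in which, by the same no-rainbow condition on triangles inside $N(v_0)$, directed $3$-cycles are forbidden.

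The final step of the argument is to combine this orientation structure with the inequality $d(u_i)+d^c(u_i)\geq n+1$ at each neighbor $u_i$---which forces many additional distinct colors and high ordinary degree there---to reach a contradiction: either by producing a directed triangle (equivalently, a rainbow triangle) inside $N(v_0)$, or by showing that the global color count is too small to meet $e(G)+c(G)\geq n(n+1)/2$. Executing this case analysis in full---simultaneously handling colors appearing only inside $N(v_0)$, only inside $V(G)\setminus N[v_0]$, and on the edges bridging the two---is, I expect, the technically most delicate portion of the proof.
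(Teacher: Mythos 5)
Your induction framework (delete a vertex $v$ with $d(v)+d^c(v)\leq n$ and recurse) is in the spirit of the paper's minimal-counterexample argument, but the structural claim you reduce everything to is false as stated, and the sketch you offer for it does not close the gap. For the falsity: color the edges of $K_n$ ($n\geq 4$) with just two colors so that every vertex meets both colors (say one Hamilton cycle gets color $1$ and all remaining edges get color $2$). There is no rainbow triangle, since only two colors exist, yet every vertex has $d(v)+d^c(v)=(n-1)+2=n+1>n$. So ``no rainbow triangle'' alone cannot produce the desired vertex; any correct version must invoke the density hypothesis $e(G)+c(G)\geq n(n+1)/2$, and your outline (minimum-degree vertex $v_0$, orientation of $G[N(v_0)]$, ensuing case analysis) never brings that hypothesis to bear in a way that could finish -- it is left entirely open at the point where the real work begins.

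The deeper issue is that $d^c(v)$ is the wrong quantity. Deleting $v$ destroys not $d^c(v)$ colors but only those colors \emph{all} of whose edges are incident to $v$; call their number $d^s(v)$. The paper's proof hinges on this distinction: a color can be saturated by at most two vertices, so $\sum_{v}d^s(v)\leq 2c(G)$, with equality if and only if every color appears on a single edge. Taking a counterexample minimal first in vertices and then in edges, one shows $e(G)+c(G)=n(n+1)/2$ exactly (otherwise delete one of two equally colored edges, which exist by Tur\'an) and $d(v)+d^s(v)\geq n+1$ for every $v$ (otherwise delete $v$); summing gives $n(n+1)\leq 2e(G)+\sum_v d^s(v)\leq 2e(G)+2c(G)=n(n+1)$, forcing every two edges to have distinct colors, which contradicts the existence of a non-rainbow triangle. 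Your plan is missing both the saturated-color count with its equality case and the edge-minimality reduction, and I do not see how the $d^c$-based version can be completed without essentially rediscovering them.
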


\begin{theorem}\label{th2}
Let $G$ be a colored graph on $n$ vertices. If $\sum_{v\in V(G)}d^c(v)\geq n(n+1)/2$, then $G$ contains a rainbow triangle.
\end{theorem}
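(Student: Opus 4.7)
The plan is to reduce Theorem~\ref{th2} to Theorem~\ref{th1} by a color-refinement trick. Starting from $G$, I would construct a recolored graph $G'$ on the same underlying graph as follows: for each color $c$ present in $G$, look at the monochromatic subgraph $G_c$ and its connected components $C_1^c,\ldots,C_{k_c}^c$, and assign a fresh distinct color to each component. Thus $G'$ has exactly the same edge set as $G$, but every color class of $G'$ is a connected subgraph.

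The first thing to check is that this operation preserves the color degree of every vertex. A vertex $v$ may be incident to several edges of the same original color $c$, but all of them lie in the unique component of $G_c$ that contains $v$, so they are reassigned the same new color. Hence the colors appearing at $v$ in $G$ correspond bijectively to the colors appearing at $v$ in $G'$, and $d^c_{G'}(v)=d^c_G(v)$. In particular the hypothesis gives $\sum_{v\in V(G')}d^c_{G'}(v)\geq n(n+1)/2$.

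Since every color class $G'_{c'}$ is a connected graph with at least one edge, we have $|V(G'_{c'})|\leq e(G'_{c'})+1$. Using the double-counting identity $\sum_{v}d^c(v)=\sum_{c'}|V(G_{c'})|$ (each pair $(v,c')$ with $c'$ appearing at $v$ is counted once on either side) and summing the previous inequality over the colors of $G'$, I obtain
\[
\sum_{v\in V(G')}d^c_{G'}(v)=\sum_{c'\in C(G')}|V(G'_{c'})|\leq e(G')+c(G').
\]
Combined with the preserved color-degree sum, this gives $e(G')+c(G')\geq n(n+1)/2$, so Theorem~\ref{th1} produces a rainbow triangle $T$ in $G'$.

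The remaining task, and the place where I expect the only real subtlety, is to verify that $T$ is still rainbow in the original coloring of $G$. If two edges of $T$ carried the same original color $c$, then, because they share a vertex of $T$, they would lie in the same connected component of $G_c$, and hence would be assigned the same new color in $G'$, contradicting the rainbowness of $T$ in $G'$. So three distinct colors in $G'$ force three distinct colors in $G$, and the triangle $T$ witnesses the conclusion. The crux of the plan is thus the observation that splitting each color along the components of its monochromatic subgraph neither destroys nor creates rainbow triangles while turning the color-degree condition into the edge-plus-color condition already handled by Theorem~\ref{th1}.
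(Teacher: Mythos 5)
Your proof is correct, and it takes a genuinely different route from the paper's. The paper first passes to an edge-minimal spanning subgraph $G'$ still satisfying the color-degree hypothesis, orients each edge toward an endpoint at which its color is unique among incident edges, and shows that the resulting oriented graph $D$ satisfies $a(D)+\sum_{v}\omega^+(v)\geq n(n+1)/2$; it then invokes Theorem~\ref{th4}, whose own proof reduces back to Theorem~\ref{th1} via the associated colored graph of $D$. Your color refinement --- splitting each color class into the connected components of its monochromatic subgraph and giving each component a fresh color --- reaches Theorem~\ref{th1} in one step: the refinement preserves every color degree (all edges of a given color at a fixed vertex lie in one component of that color's subgraph), the connectivity of each new color class gives $\sum_{c'}|V(G'_{c'})|\leq \sum_{c'}\bigl(e(G'_{c'})+1\bigr)=e(G')+c(G')$, and any rainbow triangle in the refined coloring is rainbow in the original coloring because two equally colored edges of a triangle share a vertex and hence lie in the same component. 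All the steps check out, including the double-counting identity $\sum_v d^c(v)=\sum_{c'}|V(G'_{c'})|$. What your argument buys is brevity and the complete elimination of the digraph machinery for this particular theorem; what the paper's longer detour buys is that the same minimal-subgraph-plus-orientation construction is reused verbatim to prove the stronger Theorem~\ref{th3} via Theorem~\ref{th5}, so the machinery is amortized over the rest of the paper. Both arguments ultimately rest on the same counting inequality, namely the one at the heart of Theorem~\ref{th1}.
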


Let $G$ be a complete graph with vertex set $V(G)=\{v_1,v_2,\ldots,v_n\}$. For the edge $v_iv_j$, $1\leq i<j\leq n$, we assign the color $i$ to it. Then $e(G)+c(G)=\sum_{v\in V(G)}d^c(v)=n(n+1)/2-1$, and $G$ contains no rainbow triangles. This implies that the bounds of Theorems \ref{th1} and \ref{th2} are both sharp.

Li and Wang \cite{Li_Wang} conjectured that a colored graph $G$ on $n$ vertices contains a rainbow triangle if $d^{c}(v)\geq (n+1)/2$ for every vertex $v\in V(G)$. As a corollary of Theorem \ref{th2}, we can see that Li and Wang's conjecture is true.

\medskip
\noindent
{\bf Corollary 1}\footnote{During the revision of the paper, the authors learned that Li \cite{Li} had already proved this result in a recent paper (Rainbow $C_3$'s and $C_4$'s in edge-colored graphs, \emph{Discrete Math.}~313 (2013) 1893--1896).}
Let $G$ be a colored graph on $n$ vertices. If $d^{c}(v)\geq (n+1)/2$ for every vertex $v\in V(G)$, then $G$ contains a rainbow triangle.
\medskip

With more effort, we can prove the following stronger theorem.

\begin{theorem}\label{th3}
Let $G$ be a colored graph on $n$ vertices. If $d^{c}(v)\geq n/2$ for every vertex $v\in V(G)$ and $G$ contains no rainbow triangles, then $n$ is even and $G$ is the complete bipartite graph $K_{n/2,n/2}$, unless $G=K_4-e$ or $K_4$ when $n=4$.
\end{theorem}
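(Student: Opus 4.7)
The plan is first to show $n$ is even. If $n$ were odd, then integrality of $d^c(v)$ would upgrade the hypothesis $d^c(v)\geq n/2$ to $d^c(v)\geq (n+1)/2$, so $\sum_v d^c(v)\geq n(n+1)/2$; Theorem~\ref{th2} would then give a rainbow triangle, contradicting the hypothesis. With $n$ even, Theorem~\ref{th2} in contrapositive form gives $\sum_v d^c(v)\leq n(n+1)/2-1$; combined with the trivial lower bound $\sum_v d^c(v)\geq n^2/2$, this yields the slack inequality $\sum_v (d^c(v)-n/2)\leq (n-2)/2$. In particular at least $(n+2)/2$ vertices are tight, with $d^c(v)=n/2$ exactly.

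\textbf{Step 2 (the triangle-free case).} Since $d(v)\geq d^c(v)\geq n/2$, the graph $G$ has minimum degree at least $n/2$. I will dichotomise on whether $G$ contains a triangle. In the triangle-free case, a standard argument---for any edge $uv$, triangle-freeness gives $N(u)\cap N(v)=\emptyset$, so $d(u)+d(v)\leq n$, forcing equality throughout and identifying $\{N(u),N(v)\}$ as a complete bipartition of $V$---immediately yields $G=K_{n/2,n/2}$. Thus the whole problem reduces to ruling out triangles when $n\geq 6$, together with a small-case analysis when $n=4$.

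\textbf{Step 3 (the crux: no triangle when $n\geq 6$).} Suppose towards a contradiction that $G$ has a triangle $uvw$. Since it is not rainbow, I may assume $C(uv)=C(vw)=c_0$, which already forces $d(v)\geq d^c(v)+1\geq n/2+1$. To extract more I would fix a color-representative set $N^c(v)=\{u_1,\ldots,u_{n/2}\}\subseteq N(v)$ (one neighbor per color at $v$, say $u_1=u$ representing $c_0$) and exploit the no-rainbow-triangle condition at $v$: for every edge $u_iu_j$ inside $N^c(v)$ one must have $C(u_iu_j)\in\{c_i,c_j\}$. This rigidly constrains both the edge set and the colors inside $N^c(v)$. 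Combined with the color-degree lower bound $d^c(u_i)\geq n/2$ at each representative and $d^c(w')\geq n/2$ at each non-neighbor $w'$ of $v$ (a set of size at most $n/2-1$), the distinct colors ``demanded'' by these vertices outstrip what the restricted local structure can supply: careful counting should produce either a rainbow triangle outside $\{u,v,w\}$ or a vertex with $d^c<n/2$, contradicting the hypothesis. This delicate bookkeeping, which essentially consumes all the slack from Step~1, is the main obstacle; I expect the tightest room (and hence the hardest sub-case) to be $n=6$.

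\textbf{Step 4 (the case $n=4$).} When $n=4$ the hypothesis reads $d^c(v)\geq 2$, so $d(v)\geq 2$. If $G$ is triangle-free, Step~2 gives $G=K_{2,2}$. Otherwise $G$ contains a triangle $uvw$ and the fourth vertex $x$ has $d(x)\geq 2$, hence is adjacent to at least two of $\{u,v,w\}$; this forces $G\in\{K_4-e,K_4\}$. A direct check exhibits $2$-edge-colorings of each (with every color class covering all four vertices) meeting the hypothesis without producing a rainbow triangle, confirming these as the listed exceptions.
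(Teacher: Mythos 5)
Your Steps 1, 2 and 4 are sound: the parity argument via Theorem~\ref{th2}, the reduction of the triangle-free case to $K_{n/2,n/2}$ (minimum degree at least $n/2$ plus independence of the two neighborhoods of an edge), and the $n=4$ analysis are all correct and elementary. The problem is Step 3, which is where the entire content of the theorem lives and which you have not actually proved: phrases such as ``careful counting should produce'' and ``this delicate bookkeeping \dots is the main obstacle'' are an accurate self-diagnosis. You set up correct local constraints (a non-rainbow triangle forces $d(v)\geq n/2+1$ at some vertex; an edge $u_iu_j$ inside a color-representative set of $N(v)$ must receive one of the two colors $c_i,c_j$), but no contradiction is derived from them, and it is far from clear that one follows from local counting alone. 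Note in particular that Step 1 gives you only $(n-2)/2$ units of global slack, whereas the constraints you list live in the neighborhood of a single vertex; bridging that gap is exactly the missing argument.

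For comparison, the paper does not attack a triangle of $G$ directly. It passes to a minimal spanning subgraph $G'$ with the same color-degree condition, orients each edge toward the endpoint at which the edge is the unique representative of its color, and shows that the resulting oriented graph $D$ satisfies $d^-(u)+\omega^+(u)\geq n/2$ for every $u$, where $\omega^+$ counts the components of the induced out-neighborhood; a directed triangle of $D$ is automatically rainbow in $G$. The heart of the matter is then Theorem~\ref{th5}, whose proof is an induction (deleting a suitable pair $\{v,w\}$) that leans on Shen's theorem that any oriented graph with minimum in-degree at least $(3-\sqrt{7})n$ contains a directed triangle, i.e.\ on a nontrivial partial result toward the Caccetta--H\"aggkvist conjecture. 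That the authors needed this machinery is a strong hint that the ``rigid local structure plus bookkeeping'' you propose will not close easily; at minimum you would have to carry out that counting in full (including the tight case $n=6$ you flag) before the proof could be assessed as correct.
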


Let $D=(V,A)$ be a digraph and $v$ be a vertex of $D$. We use $N_D^+(v)$ ($N_D^-(v)$) to denote the set of out-neighbors (in-neighbors), and $d_D^+(v)$ ($d_D^-(v)$), the out-degree (in-degree) of $v$ in $D$. For $S\subset V(D)$, we use $D[S]$ to denote the subdigraph induced by $S$. The \emph{out-component number} of $v$, denoted by $\omega_D^+(v)$, is the number of components of $D[N^+(v)]$. When no confusion occurs, we use $N^+(v)$, $N^-(v)$, $d^+(v)$, $d^-(v)$ and $\omega^+(v)$ instead of $N_D^+(v)$, $N_D^-(v)$, $d_D^+(v)$, $d_D^-(v)$ and $\omega_D^+(v)$, respectively. For two vertices $u,v\in V(D)$, we say that $u$ \emph{dominates} $v$ if $uv\in A(D)$. An \emph{orientation} of an (undirected) graph $G$ is a digraph obtained from $G$ by replacing each edge with one of the two possible arcs with the same ends. Such a digraph is called an \emph{oriented graph}.

The research of directed triangles in oriented graphs is closely related to that of rainbow triangles in colored graphs. Let $D$ be an oriented graph. We construct a colored graph as follows: Let $v$ be a vertex of $D$ and $H$ be a component of $D[N^+(v)]$. We assign one color to all the arcs from $v$ to the vertices in $H$. For two arcs with different tails, or with the same tail, say $v$, but with heads in different components of $D[N^+(v)]$, we assign distinct colors to them. We call the underlying graph of $D$ with this edge-coloring an \emph{associated colored graph} of $D$, and denote it by $G(D)$. One can see that $D$ contains a directed triangle if and only if $G(D)$ contains a rainbow triangle. We omit the details (the readers can find the proof in Section 2).

Let $G$ be an associated colored graph of an oriented graph $D$. Note that the color degree of $v$ in $G$ is equal to $d_D^-(v)+\omega_D^+(v)$. This implies that $\sum_{v\in V(G)}d^c(v)=\sum_{v\in V(D)}(d_D^-(v)+\omega_D^+(v))=e(G)+c(G)$. This is the reason why we consider the sum of edge number and color number for the existence of rainbow triangles in colored graphs.

Now we come back to digraphs. We use $a(D)$ to denote the number of arcs of a digraph $D$. In the following, we give two theorems concerning directed triangles corresponding to Theorems \ref{th2} and \ref{th3}, respectively.

\begin{theorem}\label{th4}
Let $D$ be an oriented graph on $n$ vertices. If $a(D)+\sum_{v\in V(D)}\omega^+(v)\geq n(n+1)/2$, then $D$ contains a directed triangle.
\end{theorem}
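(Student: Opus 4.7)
The plan is to reduce Theorem~\ref{th4} directly to Theorem~\ref{th2} via the associated colored graph construction that the authors introduce just before the statement. Given the oriented graph $D$, I would form $G=G(D)$ on the same vertex set as the underlying graph of $D$, colored so that the arcs out of each vertex $v$ share a color precisely when their heads lie in the same component of $D[N^+(v)]$, and arcs with distinct tails always receive distinct colors.

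The first step is to verify (or import from Section~2) the equivalence ``$D$ has a directed triangle $\iff$ $G(D)$ has a rainbow triangle''. One direction is immediate: a directed triangle $u\to v\to w\to u$ in $D$ has its three edges colored by three different tails, hence rainbow in $G(D)$. For the converse, suppose $uvw$ is a rainbow triangle in $G(D)$ whose orientation in $D$ is not a directed triangle. Then some vertex, say $u$, is the tail of two of the arcs, so $v,w\in N^+_D(u)$; but $vw$ is an edge of $G(D)$, so $v,w$ lie in the same component of $D[N^+(u)]$, and the arcs $uv$, $uw$ would then share a color, contradicting the rainbow property.

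The second step is the degree-sum bookkeeping that the authors have already pointed out: at each vertex $v$, the color degree in $G(D)$ splits as the in-arcs contributing $d^-_D(v)$ distinct colors (distinct tails) and the out-arcs contributing one color per component of $D[N^+(v)]$, giving $\omega^+_D(v)$ more. Hence
\begin{equation*}
\sum_{v\in V(G)} d^c(v) \;=\; \sum_{v\in V(D)}\bigl(d^-_D(v)+\omega^+_D(v)\bigr) \;=\; a(D)+\sum_{v\in V(D)}\omega^+(v) \;\geq\; \frac{n(n+1)}{2},
\end{equation*}
where the last inequality is the hypothesis of Theorem~\ref{th4}.

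The third step is to invoke Theorem~\ref{th2} on $G(D)$: it yields a rainbow triangle in $G(D)$, which by the equivalence above gives a directed triangle in $D$. There is essentially no obstacle beyond being careful with the equivalence in Step~1; the real work has already been done in Theorem~\ref{th2}, and the construction of $G(D)$ was precisely tailored so that the hypothesis of Theorem~\ref{th4} matches the color-degree-sum hypothesis of Theorem~\ref{th2}.
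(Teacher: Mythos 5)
Your reduction has a circularity problem: you invoke Theorem~\ref{th2} to produce the rainbow triangle in $G(D)$, but in this paper Theorem~\ref{th2} is itself proved \emph{from} Theorem~\ref{th4} (its proof orients a minimal spanning subgraph $G'$ and then applies Theorem~\ref{th4} to the resulting oriented graph). So as written, your argument for Theorem~\ref{th4} rests on a result whose only available proof rests on Theorem~\ref{th4}. Unless you supply an independent proof of Theorem~\ref{th2}, the logic does not close.

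The fix is small and turns your argument into exactly the paper's proof: instead of Theorem~\ref{th2}, apply Theorem~\ref{th1}, which is proved independently by a minimal-counterexample argument. For the associated colored graph one has $e(G)=a(D)$ and $c(G)=\sum_{v\in V(D)}\omega^+(v)$ (each color class is a pair consisting of a tail $v$ and a component of $D[N^+(v)]$), so the hypothesis of Theorem~\ref{th4} gives $e(G)+c(G)\geq n(n+1)/2$ directly; indeed, for these particular colored graphs $\sum_v d^c(v)=e(G)+c(G)$, which is why your bookkeeping looked right. Your Step~1 (the equivalence between directed triangles in $D$ and rainbow triangles in $G(D)$) is correct and is precisely Claim~1 of the paper's proof; only the final invocation needs to be redirected from Theorem~\ref{th2} to Theorem~\ref{th1}.
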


\begin{theorem}\label{th5}
Let $D$ be an oriented graph on $n$ vertices. If $d^-(v)+\omega^+(v)\geq n/2$ for every vertex $v\in V(D)$, then either $D$ contains a directed triangle or $n$ is even and $D$ is an orientation of $K_{n/2,n/2}$.
\end{theorem}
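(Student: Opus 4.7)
The plan is to reduce the statement to Theorem~\ref{th3} via the associated colored graph construction from Section~1. Assuming $D$ contains no directed triangle, I form $G := G(D)$. By the correspondence recalled in the introduction, $G$ has no rainbow triangle; and by the color-degree identity also noted there, $d^c_G(v) = d_D^-(v) + \omega_D^+(v) \ge n/2$ for every vertex $v$. Thus $G$ meets the hypotheses of Theorem~\ref{th3}, and we obtain one of the following: either $n$ is even and $G = K_{n/2, n/2}$, or $n = 4$ and $G \in \{K_4,\, K_4 - e\}$. In the principal case, $G$ is by construction the underlying graph of $D$, so $D$ is an orientation of $K_{n/2, n/2}$ and we are done.

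The only remaining task, and the one I expect to be the main obstacle, is to eliminate the two four-vertex exceptions. I would dispose of these by direct case analysis. For $G = K_4$: $D$ is a tournament on four vertices, and the only triangle-free such tournament is the transitive one; at its source $v$, one has $d^-(v) = 0$ and $D[N^+(v)]$ is a (weakly) connected transitive tournament on three vertices, so $\omega^+(v) = 1$ and the sum equals $1 < 2$, contradicting the hypothesis. For $G = K_4 - e$ with missing edge $v_1 v_2$: after fixing $v_3 \to v_4$ up to symmetry, the requirement that neither triangle $v_1 v_3 v_4$ nor $v_2 v_3 v_4$ be directed leaves only a few possible orientations at $v_1$ and at $v_2$. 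Each of them either has some $v_i$ ($i \in \{1,2\}$) dominating both $v_3, v_4$, in which case $d^-(v_i) = 0$ while the arc $v_3 v_4$ keeps $D[N^+(v_i)] = \{v_3, v_4\}$ connected, so that $\omega^+(v_i) = 1$; or else has $v_3$ dominating all of $\{v_1, v_2, v_4\}$, in which case $d^-(v_3) = 0$ and the underlying graph of $D[\{v_1, v_2, v_4\}]$ is the path $v_1 v_4 v_2$, giving $\omega^+(v_3) = 1$. In every surviving orientation the sum equals $1 < 2$, again contradicting the hypothesis.

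Everything of substance is packaged inside Theorem~\ref{th3}; beyond invoking that result, the reduction is immediate, and the only extra work is the small-case verification sketched above.
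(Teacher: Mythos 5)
Your reduction is circular in the context of this paper. Theorem~\ref{th3} is not an independent result here: the paper's proof of Theorem~\ref{th3} takes a minimal spanning subgraph $G'$ of $G$, orients it to obtain an oriented graph $D$ satisfying $d^-(v)+\omega^+(v)\geq n/2$ for every $v$, and then invokes Theorem~\ref{th5}. Deriving Theorem~\ref{th5} from Theorem~\ref{th3} therefore proves nothing; combined with the paper's own reduction it only establishes that the two statements are essentially equivalent. All of the actual content of Theorem~\ref{th5} --- which the paper supplies by induction on $n$, using Shen's theorem (Theorem~\ref{th6}) to dispose of the case where every in-degree is at least $(3-\sqrt{7})n$, and otherwise locating a vertex $v$ of small in-degree, showing $D[N^+(v)]$ has an isolated vertex $w$, and deleting $\{v,w\}$ --- is absent from your argument. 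Your closing remark that ``everything of substance is packaged inside Theorem~\ref{th3}'' is precisely the problem: in this paper that substance is packaged inside Theorem~\ref{th5} itself, so your proposal has no foundation unless you supply a proof of Theorem~\ref{th3} that does not pass through Theorem~\ref{th5}.

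The self-contained portions of your proposal are correct: the identity $d^c_{G(D)}(v)=d_D^-(v)+\omega_D^+(v)$ and the equivalence between directed triangles in $D$ and rainbow triangles in $G(D)$ are exactly as stated in the paper (the latter is Claim~1 in the proof of Theorem~\ref{th4}), and your finite case analysis eliminating the $n=4$ exceptions $K_4$ and $K_4-e$ checks out (in every triangle-free orientation one finds a vertex with $d^-(v)+\omega^+(v)=1<2$). But these are the easy steps; the theorem itself still needs a genuine proof.
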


The proofs of Theorems \ref{th2} and \ref{th3} are heavily based on Theorems \ref{th4} and \ref{th5}, respectively.

The following conjecture concerning directed triangles, which is a special case of the famous Caccetta-H\"{a}ggkvist Conjecture, is still open.

\begin{conjecture}[Caccetta and H\"{a}ggkvist \cite{Caccetta_Haggkvist}]
Any oriented graph on $n$ vertices with minimum in-degree at least $n/3$ contains a directed triangle.
\end{conjecture}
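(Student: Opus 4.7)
The plan is to translate the Caccetta--H\"aggkvist hypothesis into the rainbow-triangle language developed in the paper and then attempt to invoke one of the color-degree theorems. Given an oriented graph $D$ on $n$ vertices with minimum in-degree at least $n/3$, I would form the associated colored graph $G=G(D)$. Since the color degree of $v$ in $G$ equals $d_D^-(v)+\omega_D^+(v)$, and since $D$ has a directed triangle if and only if $G$ has a rainbow triangle, it suffices to produce a rainbow triangle in $G$.

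A direct application of Theorem~\ref{th5} would require $d_D^-(v)+\omega_D^+(v)\geq n/2$ at every vertex; the hypothesis alone gives only $d_D^-(v)\geq n/3$, so one is short by roughly $n/6$ out-components at each vertex. A softer route is Theorem~\ref{th4}: summing the in-degree condition yields $a(D)\geq n^2/3$, so the inequality $a(D)+\sum_v\omega^+(v)\geq n(n+1)/2$ reduces to establishing $\sum_v\omega^+(v)\geq n^2/6+n/2$. A natural strategy is therefore a structural dichotomy on each out-neighborhood: if $D[N^+(v)]$ is sparse it automatically splits into many components and feeds $\omega^+(v)$ into the global bound, while if $D[N^+(v)]$ is dense one should try to locate an arc inside $N^+(v)$ pointing back toward $v$, producing a directed triangle immediately.

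The hard part, and the reason this conjecture is still open, is that $\omega_D^+(v)$ can be as small as $1$ whenever $D[N^+(v)]$ is merely weakly connected. The sharp example on $\mathbb Z_n$ with arcs $v\to v+i$ for $1\leq i\leq \lfloor (n-1)/3\rfloor$ has $\omega^+(v)=1$ at every vertex and contains no directed triangle, so neither Theorem~\ref{th4} nor Theorem~\ref{th5} can be pushed down to the $n/3$ threshold without new input. My plan would therefore aim at a dichotomy of the following shape: either some vertex has $\omega^+(v)$ large enough to trigger the rainbow machinery on a local subgraph of $G(D)$, or every out-neighborhood is dense and connected, in which case one must exploit a counting argument on the arcs inside $\bigcup_v D[N^+(v)]$ to extract a directed triangle by other means. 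Executing the second horn is precisely where all known partial results (the incremental improvements of the $n/3$ threshold) stall, and any honest attempt at the full conjecture will need an idea genuinely beyond the color-degree framework of the present paper.
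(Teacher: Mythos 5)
The statement you were asked to prove is the triangle case of the Caccetta--H\"aggkvist conjecture, which the paper itself presents as an \emph{open problem} and does not prove; there is no proof in the paper to compare against, and your text, by your own admission, is not a proof either. Your preliminary analysis is sound as far as it goes: the reduction to rainbow triangles via $G(D)$ is valid, the arithmetic showing that Theorem~\ref{th4} would need $\sum_v\omega^+(v)\geq n^2/6+n/2$ is correct, and the circulant example on $\mathbb Z_n$ with out-neighborhood $\{v+1,\dots,v+\lfloor(n-1)/3\rfloor\}$ does indeed have $\omega^+(v)=1$ everywhere and no directed triangle, which correctly demonstrates that the color-degree machinery of Theorems~\ref{th4} and~\ref{th5} cannot close the gap between the known threshold $\alpha n$ with $\alpha=3-\sqrt7$ (Theorem~\ref{th6}) and the conjectured $n/3$.

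The genuine gap is the entirety of your ``second horn'': when every $D[N^+(v)]$ is weakly connected, you propose ``a counting argument on the arcs inside $\bigcup_v D[N^+(v)]$ to extract a directed triangle by other means,'' but no such argument is supplied, and supplying one is exactly the open conjecture. Moreover the dichotomy itself is not set up in a usable form: a single vertex with large $\omega^+(v)$ does not by itself ``trigger the rainbow machinery on a local subgraph,'' since Theorems~\ref{th1} and~\ref{th2} require global bounds on sums over all vertices, and a local surplus at one vertex cannot compensate for $\omega^+(u)=1$ everywhere else. In short, your submission is a correct diagnosis of why the problem is hard rather than a proof attempt with a fixable flaw; nothing in it should be read as progress toward the statement.
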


Since this conjecture is difficult to prove, one may seek for the value $\alpha$ as small as possible such that every oriented graph on $n$ vertices with minimum in-degree at least $\alpha n$ contains a directed triangle. The best value of $\alpha$ known to us is $0.3435\cdots$ (See Lichiardopol \cite{Lichiardopol}). We list the following result due to Shen, which is used in our proof of Theorem \ref{th5}.

\begin{theorem}[Shen \cite{Shen}]\label{th6}
If $\alpha=3-\sqrt{7}=0.3542\cdots$, then any oriented graph on $n$ vertices with minimum in-degree at least $\alpha n$ contains a directed triangle.
\end{theorem}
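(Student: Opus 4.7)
The plan is to proceed by contradiction: assume $D$ has no directed triangle yet $d^-(v)\ge\alpha n$ for every vertex $v$, with $\alpha=3-\sqrt 7$, and derive a combinatorial inequality that this value of $\alpha$ cannot satisfy. The starting observation, drawn from triangle-freeness, is that for every arc $u\to v$ a vertex $w\in N^+(v)\cap N^-(u)$ would close the directed triangle $u\to v\to w\to u$. Hence $N^+(v)\cap N^-(u)=\emptyset$, and since neither set contains $u$ or $v$, we obtain the fundamental per-arc inequality
\[
d^+(v)+d^-(u)\le n-2\qquad\text{for every arc }u\to v.
\]

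Summing this over all $m=a(D)$ arcs, the left-hand side collapses to $2\sum_v d^+(v)d^-(v)=2P_2$, where $P_2$ denotes the number of directed paths of length two, yielding $P_2\le (n-2)m/2$. Pairing this with the crude lower bound $P_2=\sum_v d^+(v)d^-(v)\ge\alpha n\cdot\sum_v d^+(v)=\alpha nm$ coming from $\min d^-\ge\alpha n$ gives only $\alpha\le (n-2)/(2n)$, i.e.\ essentially $\alpha<1/2$. This linear-in-$\alpha$ argument is the main obstacle: closing the roughly $0.15$ gap between $1/2$ and $3-\sqrt 7\approx 0.3542$ is the heart of the theorem.

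To sharpen the bound, my plan is to apply triangle-freeness at a second level by noting that for each vertex $v$ the induced sub-digraph $D[N^-(v)]$ is itself triangle-free, so the per-arc inequality can be reapplied inside each in-neighborhood. I would then double-count triples $(u,u',v)$ with $u,u'\in N^-(v)$ and $u\to u'$, playing the number of arcs inside $N^-(v)$ off against the sum $\sum_{u\in N^-(v)}d^+(u)$ and introducing a term of order $\sum_v d^-(v)^2\ge\alpha^2 n^3$. Since $\alpha=3-\sqrt 7$ is the smaller root of $\alpha^2-6\alpha+2=0$, the aim is to combine the first-order count of $P_2$ with this second-order count so that the resulting inequality has the quadratic form $\alpha^2-6\alpha+2\ge 0$, forcing $\alpha\le 3-\sqrt 7$. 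Getting the weights in the double count to line up so that the quadratic closes at exactly this threshold—rather than at a weaker root—is where I expect the real work to sit; it is quite possible that the sharp constant genuinely requires a global averaging argument over all vertices simultaneously, not merely the local second-moment improvement sketched here.
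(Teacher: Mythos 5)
This statement is quoted from Shen's paper \cite{Shen} and is not proved in the paper under review, so there is no internal proof to compare against; the question is therefore whether your argument stands on its own. It does not: what you have actually established is only the first-order bound. The per-arc inequality $d^+(v)+d^-(u)\le n-2$ and the summation giving $2\sum_v d^+(v)d^-(v)\le (n-2)a(D)$ are correct, but as you yourself compute, they yield a contradiction only when $\alpha>(n-2)/(2n)$, which says nothing for $\alpha=3-\sqrt 7\approx 0.3542$. Everything that distinguishes the theorem from the trivial $\alpha<1/2$ statement is deferred to a second-moment ``plan'' that is never executed, and you explicitly concede that you do not know how to make the weights close at the root of $\alpha^2-6\alpha+2=0$. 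A proof proposal whose quantitative core is an acknowledged open step is a proposal with a genuine gap, not a proof.

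Moreover, the specific plan you sketch has a concrete obstruction. You propose to reapply the per-arc inequality inside each $D[N^-(v)]$ and to count triples $(u,u',v)$ with $u,u'\in N^-(v)$ and $u\to u'$, hoping for a term of order $\alpha^2 n^3$. But the hypotheses give no lower bound on the number of arcs induced on $N^-(v)$: for $u\in N^-(v)$, triangle-freeness only forces $N^-(u)\subseteq V\setminus(\{u,v\}\cup N^+(v))$, and this ambient set can easily contain all of $N^-(u)$ outside $N^-(v)$ when $2\alpha n\le n-d^+(v)$, which is consistent with $\alpha=3-\sqrt 7$ since $2\alpha<1$. Indeed $N^-(v)$ may be an independent set, in which case your second-order count is vacuous. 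Shen's actual argument counts transitive triangles through each arc using both in- and out-neighborhoods together with the non-adjacency structure forced by the absence of digons and directed triangles; some such additional structural input is unavoidable, and your sketch does not supply it.
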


\section{Proofs of the theorems}

\textbf{Proof of Theorem \ref{th1}.}

Suppose the contrary. Let $G$ be a counterexample with the smallest number of vertices, and then with the smallest number of edges.

\begin{claim}
$G$ contains two edges with the same color.
\end{claim}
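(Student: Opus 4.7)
The plan is to argue by contradiction: assume every color class of $G$ is a single edge, i.e.\ $c(G)=e(G)$, and derive that $G$ contains a triangle via Tur\'an's theorem, which is then automatically rainbow.

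Concretely, under the assumption $c(G)=e(G)$, the hypothesis of Theorem~\ref{th1} becomes
\[
2\,e(G)\;=\;e(G)+c(G)\;\geq\;\frac{n(n+1)}{2},
\]
so $e(G)\geq n(n+1)/4 > n^2/4 \geq \lfloor n^2/4 \rfloor$. By Tur\'an's theorem $G$ then contains a triangle. Since all edges carry pairwise distinct colors by assumption, this triangle is rainbow, contradicting the fact that $G$ is a counterexample to Theorem~\ref{th1}. Hence some color must be used on at least two edges.

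There is essentially no obstacle: the claim is a one-line consequence of the strict bound $n(n+1)/4>\lfloor n^2/4\rfloor$ (which I would check quickly for both parities of $n$) combined with Tur\'an. Note that the minimality assumption on the counterexample plays no role here; it will only be used in the subsequent claims of the proof of Theorem~\ref{th1}.
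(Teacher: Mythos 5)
Your proof is correct and uses essentially the same idea as the paper: bound $e(G)$ from below via $c(G)\leq e(G)$ and the hypothesis $e(G)+c(G)\geq n(n+1)/2$, then invoke Tur\'an's theorem to produce a triangle, which cannot be rainbow. The only cosmetic difference is that you phrase it as a contradiction starting from $c(G)=e(G)$, whereas the paper argues directly that a triangle exists and hence must repeat a color; your observation that minimality is not needed here is also accurate.
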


\begin{proof}
It follows from Tur\'{a}n's theorem that there exists a triangle in $G$, which has two edges with the same color since $G$ has no rainbow triangle.
\end{proof}

\begin{claim}
$e(G)+c(G)=n(n+1)/2$.
\end{claim}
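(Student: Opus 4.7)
The plan is to leverage the edge-minimality of the counterexample $G$, combined with Claim~1, to rule out a strict inequality. Concretely, I would proceed by contradiction: suppose $e(G)+c(G) \geq n(n+1)/2 + 1$, and then produce a proper subgraph on the same vertex set that still satisfies the hypothesis of Theorem~\ref{th1} but has fewer edges, contradicting the minimality of $G$.

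The execution has only one real step. By Claim~1, there exist two edges of $G$ sharing a common color; pick any edge $e$ whose color $C(e)$ is used on at least one other edge of $G$. Form $G' = G - e$. Then $|V(G')| = n$ and $e(G') = e(G) - 1$. The crucial observation is that $c(G') = c(G)$: because another edge of $G$ still carries the color $C(e)$, removing $e$ does not eliminate any color from the palette. Therefore
\[
e(G') + c(G') \;=\; e(G) - 1 + c(G) \;\geq\; \frac{n(n+1)}{2}.
\]
So $G'$ satisfies the hypothesis of Theorem~\ref{th1}, lives on the same vertex set as $G$, and has strictly fewer edges. By the choice of $G$ (smallest on $n$ vertices, then smallest in edge count), $G'$ cannot be a counterexample, hence $G'$ contains a rainbow triangle. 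But any rainbow triangle in $G' \subseteq G$ is also a rainbow triangle in $G$, contradicting the assumption that $G$ has none.

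This forces $e(G) + c(G) \leq n(n+1)/2$, and combined with the counterexample hypothesis $e(G) + c(G) \geq n(n+1)/2$ we conclude equality, as claimed. I do not anticipate a genuine obstacle here: the whole argument is a one-line minimality contradiction, and the only delicate point is the bookkeeping that removing a \emph{repeated-color} edge leaves $c(G)$ unchanged — which is exactly what Claim~1 is set up to supply. The reason Claim~1 had to be proved first is precisely to guarantee that such an edge $e$ exists to remove; without it, every edge removal could drop both $e(G)$ and $c(G)$ by one and the above tight argument would collapse.
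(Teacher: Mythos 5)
Your proposal is correct and follows essentially the same argument as the paper: remove one edge of a repeated color (guaranteed by Claim~1), note that this decreases $e$ by one while leaving $c$ unchanged, and contradict the edge-minimality of the counterexample. The only cosmetic difference is that you phrase the contradiction via the deleted-edge subgraph containing a rainbow triangle, whereas the paper observes directly that it would be a smaller counterexample; these are equivalent.
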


\begin{proof}
By Claim 1, let $e_1$ and $e_2$ be two edges with the same color. Then $e(G-e_1)=e(G)-1$ and $c(G-e_1)=c(G)$. If $e(G)+c(G)\geq n(n+1)/2+1$, then $e(G-e_1)+c(G-e_1)\geq n(n+1)/2$. Note that $G-e_1$ does not contain a rainbow triangle. Thus $G-e_1$ is a counterexample with fewer edges, a contradiction.
\end{proof}

Let $v$ be a vertex in $G$, and $s$ a color in $C(G)$. If all the edges with color $s$ are incident to $v$, then we call $s$ a color \emph{saturated} by $v$. We use $d^s(v)$ to denote the number of colors saturated by $v$.

\begin{claim}
$d(v)+d^s(v)\geq n+1$, for every $v\in V(G)$.
\end{claim}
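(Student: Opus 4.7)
The plan is to exploit the minimality of $G$ by deleting the vertex $v$ and tracking how $e$ and $c$ change. Since $G$ is a minimum counterexample in the number of vertices, the graph $G-v$ (which trivially has no rainbow triangle) must fail the hypothesis of Theorem \ref{th1}; that is,
\[
e(G-v)+c(G-v) \leq \frac{(n-1)n}{2}-1,
\]
using that $(n-1)n/2$ is an integer. The goal is then to rewrite the left-hand side in terms of $e(G)$, $c(G)$, $d(v)$, and $d^s(v)$, and invoke Claim 2 to conclude.

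The first substitution is immediate: $e(G-v)=e(G)-d(v)$. For the color count I would argue that a color $s$ of $G$ is missing from $G-v$ if and only if every edge of color $s$ is incident with $v$, i.e., $s$ is saturated by $v$; hence $c(G-v)=c(G)-d^s(v)$. This is really the only step that needs a word of justification, and it follows directly from the definitions of $C(G-v)$ and of a saturated color.

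Combining these identities with the displayed inequality yields
\[
e(G)+c(G)-d(v)-d^s(v)\leq \frac{(n-1)n}{2}-1,
\]
and substituting $e(G)+c(G)=n(n+1)/2$ from Claim 2 gives $d(v)+d^s(v)\geq n+1$, as desired.

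I do not anticipate a genuine obstacle; the only subtlety is making sure the strict inequality $e(G-v)+c(G-v)<(n-1)n/2$ is valid. It is, because $G-v$ has $n-1$ vertices and no rainbow triangle, so by the minimality of $n$ among counterexamples it cannot satisfy the hypothesis of Theorem \ref{th1}; integrality of $(n-1)n/2$ then promotes the strict inequality to $\leq (n-1)n/2-1$.
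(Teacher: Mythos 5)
Your argument is correct and is essentially identical to the paper's: both use $e(G-v)=e(G)-d(v)$, $c(G-v)=c(G)-d^s(v)$, the equality from Claim 2, and the vertex-minimality of $G$; the paper merely phrases it as a contradiction (assuming $d(v)+d^s(v)\leq n$ forces $e(G-v)+c(G-v)\geq (n-1)n/2$) where you phrase it directly via integrality. No gap.
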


\begin{proof}
Note that $e(G-v)=e(G)-d(v)$. If a color in $C(G)$ is not saturated by $v$, then it is also a color in $C(G-v)$. This implies that $c(G-v)=c(G)-d^s(v)$. If  $d(v)+d^s(v)\leq n$, then $$e(G-v)+c(G-v)=e(G)-d(v)+c(G)-d^s(v)\geq \frac{n(n-1)}{2}.$$ Note that $G-v$ does not contain a rainbow triangle. Thus $G-v$ is a counterexample with fewer vertices, a contradiction.
\end{proof}

\begin{claim}
$\sum_{v\in V(G)}d^s(v)\leq 2c(G)$, and the equality holds if and only if every two edges have distinct colors.
\end{claim}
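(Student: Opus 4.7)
My plan is to swap the order of summation in $\sum_{v\in V(G)} d^s(v)$. Writing $f(s)$ for the number of vertices that saturate a color $s \in C(G)$, double-counting gives
\[
\sum_{v\in V(G)} d^s(v) \;=\; \sum_{s\in C(G)} f(s),
\]
so it suffices to show $f(s)\leq 2$ for every color $s$, with equality iff color $s$ appears on exactly one edge of $G$.

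To see this, fix a color $s$ and let $E_s\subseteq E(G)$ be the set of edges with color $s$. A vertex $v$ saturates $s$ precisely when $v$ lies in every edge of $E_s$, i.e.\ $v \in \bigcap_{e\in E_s} e$. If $|E_s|=1$, say $E_s=\{uw\}$, then both endpoints $u$ and $w$ saturate $s$, giving $f(s)=2$. If $|E_s|\geq 2$, then two distinct edges sharing more than one endpoint is impossible in a simple graph, so $|\bigcap_{e\in E_s} e|\leq 1$, giving $f(s)\leq 1$. In all cases $f(s)\leq 2$, hence $\sum_{v} d^s(v) \leq 2c(G)$.

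For the equality case, the above shows that $f(s)=2$ occurs exactly when $|E_s|=1$. Therefore $\sum_{v} d^s(v) = 2c(G)$ holds iff $|E_s|=1$ for every color $s\in C(G)$, which is precisely the condition that every two edges of $G$ receive distinct colors. I do not anticipate a significant obstacle here; the only mild subtlety is being careful about the single-edge case, where \emph{both} endpoints count as saturating the color, which is exactly what makes the bound $2c(G)$ rather than $c(G)$.
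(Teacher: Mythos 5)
Your proof is correct and follows the same approach as the paper: count, for each color, how many vertices saturate it, observe this is at most $2$ with equality exactly when the color appears on a single edge, and sum over colors. You merely spell out the double-counting and the simple-graph argument in more detail than the paper does.
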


\begin{proof}
Let $c$ be an arbitrary color in $C(G)$. Note that $c$ cannot be saturated by more than two vertices, and $c$ is saturated by exactly two vertices if and only if $c$ appears on only one edge. Thus we have $\sum_{v\in V(G)}d^s(v)\leq 2c(G)$, and the equality holds if and only if every two edges have distinct colors.
\end{proof}

By Claims 2, 3 and 4, we can get that
$$
n(n+1)\leq \sum_{v\in V(G)}(d(v)+d^s(v))\leq 2e(G)+2c(G)=n(n+1).
$$
This implies that $\sum_{v\in V(G)}(d(v)+d^s(v))=2e(G)+2c(G)$ and $\sum_{v\in V(G)}d^s(v)=2c(G)$. By Claim 4, every two edges have distinct colors,  contradicting to Claim 1.

The proof is complete. {\hfill$\Box$}

\noindent\textbf{Proof of Theorem \ref{th2}.}

The proof of this theorem is based on Theorem \ref{th4}, which will be proved later. Suppose that $G$ satisfies the condition of Theorem \ref{th2} but contains no rainbow triangles. Let $G'$ be a spanning subgraph of $G$ satisfying the condition of Theorem \ref{th2} with number of edges as small as possible.

\setcounter{claim}{0}
\begin{claim}
For each edge $uv\in E(G')$, one of the following is true:\\
(1) $C(uw)\neq C(uv)$ for $w\in N_{G'}(u)\backslash\{v\}$; or\\
(2) $C(wv)\neq C(uv)$ for $w\in N_{G'}(v)\backslash\{u\}$.
\end{claim}

\begin{proof}
If $C(uw)=C(uv)$ for some $w\in N_{G'}(u)\backslash\{v\}$, then the removal of the edge $uv$ does not reduce the color degree of $u$. If $C(wv)=C(uv)$ for some $w\in N_{G'}(v)\backslash\{u\}$, then the
removal of the edge $uv$ does not reduce the color degree of $v$. Since $G$ contains the fewest edges, either (1) or (2) holds.
\end{proof}

Now we give an orientation to $G'$ in such a way: for $uv\in E(G')$, if (1) of Claim 1 holds, then the orientation of the edge is from $v$ to $u$; if (2) holds, then the orientation is from $u$ to $v$; if both (1) and (2) hold, then we give the orientation arbitrarily. We denote the resulting oriented graph by $D$. By the construction of $D$, we have
\begin{claim}
If $uv\in A(D)$, then $C(uv)$ is different from the colors of every other arcs incident to $v$.
\end{claim}

\begin{claim}
Let $v$ be a vertex of $D$. If $x,y\in N^+(v)$ and $xy\in A(D)$, then $C(vx)=C(vy)$.
\end{claim}

\begin{proof}
By Claim 2, $C(vx)\neq C(xy)$ and $C(vy)\neq C(xy)$. If $C(vx)\neq C(vy)$, then $vxyv$ is a rainbow triangle in $G$, a contradiction.
\end{proof}

By applying Claim 3 repeatedly, we can conclude that if $H$ is a component of $D[N^+(v)]$, then the colors of the arcs from $v$ to all vertices in $H$ are the same.

\begin{claim}
$d^-(v)+\omega^+(v)\geq d_{G'}^c(v)$, for every vertex $v\in V(D)$.
\end{claim}

\begin{proof}
By Claim 2, every arc with head $v$ has the color different from the colors of the other arcs incident to $v$. By Claim 3, the arcs from $v$ to the vertices in the same component of $D[N^+(v)]$ have the same color. Hence $d^-(v)+\omega^+(v)\geq d_{G'}^{c}(v)$.
\end{proof}

By Claim 4, we have
$$
a(D)+\sum\limits_{v\in V(D)}\omega^+(v)=\sum\limits_{v\in V(D)}(d^-(v)+\omega^+(v))\geq\sum\limits_{v\in V(G')}d_{G'}^c(v)\geq \frac{n(n+1)}{2}.
$$ By Theorem 4, there is a directed triangle in $D$, say $uvwu$. By Claim 2, $C(uw)\neq C(uv)$, $C(uv)\neq C(vw)$ and $C(vw)\neq C(uw)$. Therefore, $uvwu$ is a rainbow triangle in $G$, a contradiction.

The proof is complete. {\hfill$\Box$}

\noindent\textbf{Proof of Theorem \ref{th3}.}

The proof of this theorem is based on Theorem \ref{th5}. Suppose that $G$ contains no rainbow triangles and $d^c(v)\geq n/2$ for every $v\in V(G)$. Let $G'$ be a spanning subgraph of $G$ satisfying the condition of Theorem \ref{th3} with number of edges as small as possible. As in the proof of Theorem \ref{th2}, we have

\setcounter{claim}{0}
\begin{claim}
For each edge $uv\in E(G')$, one of the following is true:\\
(1) $C(uw)\neq C(uv)$ for $w\in N_{G'}(u)\backslash\{v\}$; or\\
(2) $C(wv)\neq C(uv)$ for $w\in N_{G'}(v)\backslash\{u\}$.
\end{claim}

Now we give an orientation to $G'$ as in the proof of Theorem \ref{th2}, and similarly, we have

\begin{claim}
If $uv\in A(D)$, then $C(uv)$ is different from the colors of every other arcs incident to $v$.
\end{claim}

\begin{claim}
Let $v$ be a vertex of $D$. If $x,y\in N^+(v)$ and $xy\in A(D)$, then $C(vx)=C(vy)$.
\end{claim}

\begin{claim}
$d^-(v)+\omega^+(v)\geq d_{G'}^c(v)$, for every vertex $v\in V(D)$.
\end{claim}

By Claim 4, we have $d^-(v)+\omega^+(v)\geq n/2$ for every $v\in V(D)$. By Theorem \ref{th5}, either $D$ contains a directed triangle or $n$ is even and $D$ is an orientation of the complete bipartite graph $K_{n/2,n/2}$. If there is a directed triangle in $D$, then it is a rainbow triangle in $G$, a contradiction. Thus we assume that $n$ is even and $D$ is an orientation of $G'=K_{n/2,n/2}$.

For any vertex $v\in V(G')$, since $d_{G'}^c(v)\geq n/2$ and $d_{G'}(v)=n/2$, every pair of edges incident to $v$ have distinct colors. Note that $G$ is a spanning supergraph of $G'$. If $n=2$, then $G=K_2$. If $n=4$, then $G=K_{2,2},\ K_4-e$ or $K_4$. Now suppose that $n\geq 6$, and we will show that there are no edges in $E(G)\backslash E(G')$. If not, then we assume that $uv\in E(G)$ with $u,v$ in a same partition set of the bipartite graph $G'$. Let $x,y,z$ be three vertices in the other partition set of $G'$. Since $ux$, $uy$ and $uz$ have pairwise distinct colors, there are at least two edges in $\{ux,uy,uz\}$ with colors different from $uv$. Similarly, there are at least two edges in $\{vx,vy,vz\}$ with colors different from $uv$. Hence either $uvxu$, $uvyu$, or $uvzu$ is a rainbow triangle in $G$, a contradiction.

The proof is complete. {\hfill$\Box$}

\noindent\textbf{Proof of Theorem \ref{th4}.}

Let $G$ be the associated colored graph of $D$. We first prove the following claim.

\setcounter{claim}{0}
\begin{claim}
$D$ has a directed triangle if and only if $G$ has a rainbow triangle.
\end{claim}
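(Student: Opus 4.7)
The plan is to prove both implications of the biconditional directly from the construction of the associated colored graph $G = G(D)$. The key property of the coloring to unpack is: two arcs of $D$ receive the same color in $G$ if and only if they share a common tail $v$ and their heads lie in the same component of $D[N^+(v)]$. In particular, any two arcs with distinct tails automatically get distinct colors.

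For the forward direction, suppose $u \to v \to w \to u$ is a directed triangle in $D$. Then in $G$ the triangle $uvw$ consists of three edges whose underlying arcs have three pairwise distinct tails $u$, $v$, $w$. By the property above, the three edges of the triangle have pairwise distinct colors, so it is rainbow.

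For the reverse direction, suppose $uvw$ is a rainbow triangle in $G$ but that the three corresponding arcs in $D$ do not form a directed cycle. Then, since any non-cyclic orientation of a triangle has one vertex whose two incident triangle-edges both point outward, there exists a \emph{source} vertex $s \in \{u,v,w\}$ in the triangle; let $x, y$ be the other two triangle-vertices, so $x, y \in N^+_D(s)$. Because $xy$ is an edge of the triangle in $G$, there is an arc between $x$ and $y$ in $D$, which places $x$ and $y$ in the same component of $D[N^+(s)]$. By the defining property of the coloring, the arcs $sx$ and $sy$ then receive the \emph{same} color, contradicting the assumption that the triangle on $\{s,x,y\}$ is rainbow.

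The argument is essentially mechanical once the coloring rule is translated correctly; the only step requiring any thought is the standard observation that a non-directed orientation of a triangle necessarily has a vertex with out-degree two inside it, which is exactly what lets us invoke the ``same out-component, same color'' rule to reach a contradiction.
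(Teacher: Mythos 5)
Your proof is correct and follows essentially the same route as the paper's: the forward direction uses that arcs with distinct tails receive distinct colors, and the reverse direction finds a vertex dominating the other two, whose two out-neighbors then lie in one component of its out-neighborhood and hence force equal colors. No gaps.
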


\begin{proof}
If $D$ has a directed triangle, say $uvwu$, then by the definition of associated colored graphs, $C(uv)\neq C(vw)$, $C(vw)\neq C(wu)$ and $C(wu)\neq C(uv)$. Thus $uvwu$ is a rainbow triangle in $G$.

Conversely, suppose that $G$ contains a rainbow triangle, say $uvwu$. If $\{u,v,w\}$ does not induce a directed triangle, then there is a vertex, say $u$, dominating the other two vertices. But in this case, $v$ and $w$ are in the same component of $D[N^+(u)]$. By the definition of associated colored graphs, $C(uv)=C(uw)$, a contradiction.
\end{proof}

Note that $e(G)=a(D)$ and $c(G)=\sum_{v\in V(D)}\omega^+(v)$. We have
$$
e(G)+c(G)=a(D)+\sum_{v\in V(D)}\omega^+(v)\geq \frac{n(n+1)}{2}.
$$
By Theorem \ref{th1}, there is a rainbow triangle in $G$; and by Claim 1, there is a directed triangle in $D$.

The proof is complete. {\hfill$\Box$}

\noindent\textbf{Proof of Theorem \ref{th5}.}

We prove the theorem by induction on $n$. Since the result is trivially true when $n=2,3$, we assume that $n\geq 4$. If $d^-(v)\geq \alpha n$ for every vertex $v\in V(D)$, where $\alpha=3-\sqrt{7}=0.3542\cdots$, then there is a directed triangle by Theorem \ref{th6}. Thus we suppose that there is a vertex $v$ such that
\begin{align}
d^-(v)<\alpha n.
\end{align}

Noting that $d^-(v)+\omega^+(v)\geq n/2$, we have
\begin{align}
\omega^+(v)\geq \frac{n}{2}-d^-(v).
\end{align}

\setcounter{claim}{0}
\begin{claim}
There is a component of $D[N^+(v)]$ with only one vertex.
\end{claim}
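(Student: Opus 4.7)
The plan is to argue by contradiction, supposing every component of $D[N^+(v)]$ has at least two vertices. Then $d^+(v) = \sum_H |V(H)| \geq 2\omega^+(v)$; combining with (2) gives $d^+(v) \geq n - 2d^-(v)$, and together with the oriented-graph bound $d^+(v) + d^-(v) \leq n - 1$ this forces $d^-(v) \geq 1$. This conclusion by itself is not a contradiction, so the argument must exploit the color-degree conditions at other vertices.

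I would assume (as is permitted, since otherwise Theorem~\ref{th5} is complete via its first alternative) that $D$ has no directed triangle. Triangle-freeness then yields $N^+(u) \cap N^-(v) = \emptyset$ for every $u \in N^+(v)$, and the no-singleton assumption gives $N^+(u) \cap N^+(v) \subseteq V(H_u) \setminus \{u\}$, where $H_u$ is $u$'s component. Writing $W = V(D) \setminus (\{v\} \cup N^+(v) \cup N^-(v))$, this yields
\[
d^+(u) \leq |V(H_u)| - 1 + |W|, \qquad \omega^+(u) \leq d^+(u),
\]
while the first paragraph also gives $|W| = n - 1 - d^+(v) - d^-(v) \leq d^-(v) - 1 < \alpha n - 1$ via (1).

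The last step is to combine these bounds with the color-degree condition $d^-(u) + \omega^+(u) \geq n/2$ applied at a suitable vertex in a smallest component $H$ of size $k \geq 2$ — ideally a sink $y$ of $D[V(H)]$, so that $d^+(y) \leq |W|$ and hence $d^-(y) \geq n/2 - |W|$ — and dually at a vertex $z \in N^-(v)$: triangle-freeness forbids arcs from $N^+(v)$ into $z$, giving $d^-(z) \leq d^-(v) - 1 + |W|$, while $N^+(z) \cap N^+(v)$ lies in a single component of $D[N^+(z)]$ (pulled together through $v$), giving $\omega^+(z) \leq 1 + |W|$. Substituting these bounds back into the color-degree inequalities at $y$ and at $z$, and invoking (1) to bound $|W|$, should produce the required contradiction for all $n \geq 4$. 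The main obstacle I expect is ensuring a sink exists in $D[V(H)]$ when $k \geq 4$ (since $D[V(H)]$ could contain directed cycles, e.g., a directed $4$-cycle); I would handle this by choosing $u \in V(H)$ minimizing $d^+_{D[V(H)]}(u)$ and carefully accounting for the at-most-one extra in-arc from $V(H)$ that this relaxation introduces.
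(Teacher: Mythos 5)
There is a genuine gap: the final step ``substituting these bounds back \dots should produce the required contradiction'' does not actually close. Your bound at an arbitrary $z\in N^-(v)$ gives $d^-(z)+\omega^+(z)\le (d^-(v)-1+|W|)+(1+|W|)=d^-(v)+2|W|$, and combined with $d^-(z)+\omega^+(z)\ge n/2$ and $|W|\le d^-(v)-1$ this yields only $d^-(v)\ge (n+4)/6\approx 0.167n$, which is far from contradicting $d^-(v)<\alpha n$ with $\alpha=3-\sqrt7=0.3542\cdots$. The auxiliary bounds at a ``sink'' $y$ of a smallest component of $D[N^+(v)]$ do not rescue this: they give a lower bound on the component size of the form $n/2-d^-(v)-2|W|$, which is already negative in the regime $d^-(v)\approx \alpha n$ (since $3\alpha>1/2$), and in any case the existence of such a sink is, as you note yourself, not guaranteed.

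The missing idea is a \emph{second} application of Theorem~\ref{th6}, this time to the induced subdigraph $H=D[N^-(v)]$. Either every vertex of $H$ has in-degree at least $\alpha|V(H)|$ inside $H$, in which case $H$ (hence $D$) contains a directed triangle and we are done, or there is a vertex $u\in N^-(v)$ with $d^-_H(u)<\alpha d^-(v)$. For that specific $u$, triangle-freeness excludes in-arcs from $N^+(v)$, so $d^-(u)\le d^-_H(u)+|W|<\alpha d^-(v)+|W|$, while (as in your argument for $z$) all out-neighbours of $u$ in $\{v\}\cup N^-(v)\cup N^+(v)$ lie in one component of $D[N^+(u)]$, so $d^-(u)+\omega^+(u)<\alpha d^-(v)+1+|W|$. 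With $|W|\le d^-(v)-1$ this gives $n/2<(1+\alpha)d^-(v)$, i.e.\ $d^-(v)>n/(2(1+\alpha))>\alpha n$ because $2\alpha(1+\alpha)<1$ --- the contradiction with (1). Your choice of an arbitrary $z\in N^-(v)$ replaces the sharp quantity $\alpha d^-(v)$ by the trivial $d^-(v)-1$, which is exactly where the argument loses the factor it needs; the whole detour into $N^+(v)$ and its sinks can be dropped.
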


\begin{proof}
We use $b(v)$ to denote the number of vertices which are not adjacent to $v$.

Suppose that every component of $D[N^+(v)]$ has at least two vertices. Then
$$n=d^-(v)+d^+(v)+1+b(v)\geq d^-(v)+2\omega^+(v)+1+b(v),
$$ and by (2),
$$b(v)\leq n-d^-(v)-2\omega^+(v)-1\leq n-d^-(v)-2(\frac{n}{2}-d^-(v))-1.$$
That is,
\begin{align}
b(v)\leq d^-(v)-1.
\end{align}

Let $H$ be the subdigraph of $D$ induced by $N^-(v)$. If for every vertex $u\in V(H)$, $d_{H}^-(u)\geq \alpha d^-(v)=\alpha|V(H)|$, then by Theorem \ref{th6}, there is a directed triangle in $H$. Thus we assume that there is a vertex $u\in V(H)$ such that $d_{H}^-(u)<\alpha d^-(v)$.

First for every $w\in N^+(v)$, $wu\notin A(D)$; otherwise $uvwu$ is a directed triangle. Since $uv\in A(D)$, all the out-neighbors of $u$ in $\{v\}\cup N^-(v)\cup N^+(v)$ are in a same component of $D[N^+(u)]$. Every vertex not adjacent to $v$ contributes at most one to $d^-(u)+\omega^+(u)$. Thus we have
$$
d^-(u)+\omega^+(u)\leq d_{H}^-(u)+1+b(v)<\alpha d^-(v)+1+b(v).
$$ Since $d^-(u)+\omega^+(u)\geq n/2$, we have
\begin{align}
b(v)>\frac{n}{2}-1-\alpha d^-(v).
\end{align}

Combining (3) with (4), we have $n/2-1-\alpha d^-(v)<d^-(v)-1$, and
\begin{align*}
d^-(v)>\frac{n}{2(1+\alpha)}>\alpha n
\end{align*}
(noting that $2\alpha(1+\alpha)=0.9594\cdots <1$), contradicting to (1).
\end{proof}

Now let $w$ be an isolated vertex of $D[N^+(v)]$, and let $D'=D-\{v,w\}$.

\begin{claim}
For every vertex $u\in V(D')$, $d_{D'}^-(u)+\omega_{D'}^+(u)\geq d^-(u)+\omega^+(u)-1$.
\end{claim}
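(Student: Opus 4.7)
The plan is to do a case analysis on the relationship of $u$ to each of $v$ and $w$ (in-neighbor, out-neighbor, or non-adjacent), and show that deleting $v$ and $w$ together decreases $d^-(u)+\omega^+(u)$ by at most $1$. The starting point is that deleting a single vertex $z$ from $D$ decreases $d^-(u)+\omega^+(u)$ by at most $1$: if $z\in N^-(u)$ then $d^-$ drops by $1$ while $\omega^+$ is unchanged; if $z\in N^+(u)$ forms a singleton component of $D[N^+(u)]$ then $\omega^+$ drops by $1$; if $z\in N^+(u)$ lies in a larger component then that component is replaced by $k\geq 1$ smaller ones and $\omega^+$ changes by $k-1\geq 0$; and if $z$ is non-adjacent to $u$ nothing changes.

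The naive total is therefore a decrease of at most $2$, and I would show that the sub-cases in which both deletions individually cost $1$ are ruled out. The two crucial inputs are $(\mathrm{a})$ the isolation of $w$ in $D[N^+(v)]$, which forces $w$ to be non-adjacent in $D$ to every other vertex of $N^+(v)$; and $(\mathrm{b})$ the standing assumption that $D$ has no directed triangle (for otherwise Theorem \ref{th5} is immediate). The case $v,w\in N^-(u)$ would place both $u$ and $w$ in $N^+(v)$ together with the arc $wu$, contradicting $(\mathrm{a})$. The case $v\in N^-(u)$, $w\in N^+(u)$ similarly places $u,w\in N^+(v)$ with the arc $uw$, again contradicting $(\mathrm{a})$. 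The case $v\in N^+(u)$, $w\in N^-(u)$ yields the directed triangle $u\to v\to w\to u$ (using $v\to w$), contradicting $(\mathrm{b})$.

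The one remaining case is $v,w\in N^+(u)$. Here the arc $v\to w$ belongs to $D[N^+(u)]$, so $v$ and $w$ lie in a common component $C$ of $D[N^+(u)]$. If $|C|=2$, removing both vertices deletes $C$ entirely and $\omega^+$ drops by exactly $1$; if $|C|\geq 3$, the remainder $C\setminus\{v,w\}$ is nonempty and breaks into some $k\geq 1$ components, so $\omega^+$ changes by $k-1\geq 0$. Since $d^-(u)$ is unaffected in this case, the total decrease is at most $1$ here as well.

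I expect the main obstacle to be the careful bookkeeping of how $\omega^+$ behaves when a vertex is removed from a single component, but once the per-vertex effect is pinned down the rest is a routine cross-check that no two ``bad'' contributions can coexist, leveraging $(\mathrm{a})$ and $(\mathrm{b})$.
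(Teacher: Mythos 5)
Your proposal is correct and follows essentially the same route as the paper: a case analysis on how $u$ relates to $v$ and to $w$, using the isolation of $w$ in $D[N^+(v)]$ and the absence of directed triangles to eliminate the configurations where deleting both vertices could cost $2$, and the component-splitting argument when $v,w\in N^+(u)$. The paper merely organizes the cases by $u$'s relation to $v$ first ($u\in N^-(v)$, $u\in N^+(v)\setminus\{w\}$, or non-adjacent) rather than by the joint pair of relations, but the content is identical.
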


\begin{proof}
First we assume that $u\in N^-(v)$. Note that $wu\notin A(D)$; otherwise $uvwu$ will be a directed triangle. We have $d_{D'}^-(u)=d^-(u)$. If $uw\in A(D)$, then $v$ and $w$ are in the same component of $D[N^+(u)]$. Since the removal of $\{v,w\}$ does not change the components of $D[N^+(u)]$ not containing $v$, we have $\omega_{D'}^+(u)\geq \omega^+(u)-1$, and then $d_{D'}^-(u)+\omega_{D'}^+(u)\geq d^-(u)+\omega^+(u)-1$.

Next we assume that $u\in N^+(v)\backslash\{w\}$. Since $w$ is an isolated vertex of $D[N^+(v)]$, it is not adjacent to $u$. This implies that $d_{D'}^-(u)=d^-(u)-1$ and $\omega_{D'}^+(u)=\omega^+(u)$. Thus $d_{D'}^-(u)+\omega_{D'}^+(u)=d^-(u)+\omega^+(u)-1$.

At last, we assume that $u$ is not adjacent to $v$. If $u$ and $w$ are not adjacent to each other, then the removal of $\{v,w\}$ does not change the in- and out-neighbors of $u$. If $wu\in A(D)$, then $d_{D'}^-(u)=d^-(u)-1$ and $\omega_{D'}^+(u)=\omega^+(u)$. If $uw\in A(D)$, then $d_{D'}^-(u)=d^-(u)$, and the removal of $\{v,w\}$ does not change the components of $D[N^+(u)]$ not containing $w$. In any case, we have $d_{D'}^-(u)+\omega_{D'}^+(u)\geq d^-(u)+\omega^+(u)-1$.
\end{proof}

By induction hypothesis, $D'$ contains a directed triangle or $n$ is even and $D'$ is an orientation of $K_{n/2-1,n/2-1}$. If $D'$ contains a directed triangle, then it is also a directed triangle in $D$. Now we assume that $n$ is even and $D'$ is an orientation of $K_{n/2-1,n/2-1}$. Let $V(D')=X\cup Y$, where $X$ and $Y$ are two partition sets of the bipartite graph $D'$.

\begin{claim}
For every vertex $u\in V(D)\backslash\{v,w\}$, $u$ is adjacent to exactly one vertex of $\{v,w\}$.
\end{claim}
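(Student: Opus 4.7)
The plan is to sharpen Claim~2 to equalities for every vertex of $V(D')$, and then to exclude ``adjacent to neither $v$ nor $w$'' and ``adjacent to both $v$ and $w$'' by a drop analysis.

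First I would observe that since $D'=K_{n/2-1,n/2-1}$ is bipartite, every $D'$-out-neighborhood is independent, so $\omega_{D'}^+(u)=d_{D'}^+(u)$ and $d_{D'}^-(u)+\omega_{D'}^+(u)=d_{D'}(u)=n/2-1$ for each $u\in V(D')$. Combined with Claim~2 and $d^-(u)+\omega^+(u)\ge n/2$, this pinches everything to equality: $d^-(u)+\omega^+(u)=n/2$ and the ``drop'' $(d^-(u)+\omega^+(u))-(d_{D'}^-(u)+\omega_{D'}^+(u))$ equals $1$ for every $u\in V(D')$. If $u$ were adjacent to neither $v$ nor $w$, the drop would be $0$; so every $u\in V(D')$ is adjacent to at least one of $\{v,w\}$.

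Next I would show that adjacent-to-both is impossible. A quick case check restricts the arc-configuration to $uv,uw\in A(D)$: $vu\in A(D)$ would place $u\in N^+(v)$, and then any arc between $u$ and $w$ would violate the isolation of $w$ in $D[N^+(v)]$, while $uv,wu\in A(D)$ closes the directed triangle $u\to v\to w\to u$. With $v,w\in N^+(u)$ and $v\to w$, the drop refines to $1-k$, where $k$ is the number of components of $D'[N_{D'}^+(u)]$ meeting $N(v)\cup N(w)$. Forcing drop~$=1$ gives $k=0$, and combining with the previous paragraph yields $N_{D'}^+(u)=\emptyset$. WLOG $u\in X$, so $y\to u$ in $D$ for every $y\in Y$.

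Finally I would derive a contradiction from the existence of any $y\in Y$, where $|Y|=n/2-1\ge 1$. The orientations $v\to y$ and $w\to y$ close directed triangles $v\to y\to u\to v$ and $w\to y\to u\to w$ respectively, so the adjacency-classes ``$v\to y$, $y\not\sim w$'' and ``$w\to y$, $y\not\sim v$'' are impossible. Every other admissible class has $y$ sending an arc into $v$, into $w$, or into both; for any such $y$ the drop-formula applied at $y$, coupled with $N_{D'}^+(y)\subseteq X$ being edgeless in $D'$, forces no element of $N_{D'}^+(y)$ to be adjacent in $D$ to the $\{v,w\}$-vertex(es) appended to $N^+(y)$. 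But $u\in N_{D'}^+(y)$ (because $y\to u$) and $u$ is adjacent to both $v$ and $w$, a contradiction. The main subtlety is this uniform drop argument at $y$: the bipartite structure of $N_{D'}^+(y)$ converts the ``no merging'' condition into a pure adjacency forbiddance that $u$'s twofold adjacency violates outright.
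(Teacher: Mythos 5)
Your proof is correct and follows essentially the same route as the paper's: rule out ``adjacent to neither'' by a count, reduce ``adjacent to both'' to the configuration $u\to v$, $u\to w$ via the isolation of $w$ and the triangle $uvwu$, force $y\to u$ for every $y$ on the opposite side, exclude $v\to y$ and $w\to y$ by the triangles $uvyu$ and $uwyu$, and finish with a component-merging count at $y$. The only real difference is bookkeeping: you package the paper's ad hoc degree/component estimates into a uniform ``drop equals $1$'' identity obtained by pinching Claim~2 against the bipartiteness of $D'$, which is a clean equivalent of the same counting.
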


\begin{proof}
If $u$ is adjacent to neither $v$ nor $w$, then $d^-(u)+\omega^+(u)\leq d^-(u)+d^+(u)=n/2-1$, a contradiction. This implies that any vertex in $V(D)\backslash\{v,w\}$ is adjacent to at least one vertex in $\{v,w\}$.

Now suppose the contrary that $u$ is adjacent to both $v$ and $w$. If $vu\in A(D)$, then $w$ and $u$ are in the same component of $D[N^+(v)]$, contradicting to that $w$ is an isolated vertex of $D[N^+(v)]$. Thus we assume that $uv\in A(D)$. If $wu\in A(D)$, then $uvwu$ is a directed triangle. Thus we assume that $uw\in A(D)$. Without loss of generality, we assume that $u\in X$.

Let $y\in Y$. We claim that $yu\in A(D)$. Suppose the contrary that $uy\in A(D)$. Since $y$ is adjacent to either $v$ or $w$, $\{y,v,w\}$ is contained in a same component of $D[N^+(u)]$. Note that $u$ is adjacent to $n/2-2$ vertices other that $y,v$ and $w$. This implies that $d^-(u)+\omega^+(u)\leq n/2-1$, a contradiction. Thus as we claimed, $yu\in A(D)$.

If $vy\in A(D)$ or $wy\in A(D)$, then $uvyu$ or $uwyu$ is a directed triangle. Thus we assume that $vy\notin A(D)$ and $wy\notin A(D)$. Note that $v,w$ (if dominated by $y$) and $u$ are in a same component of $D[N^+(y)]$, and $y$ is adjacent to $n/2-2$ vertices other that $u,v$ and $w$. This implies that $d^-(y)+\omega^+(y)\leq n/2-1$, a contradiction.
\end{proof}

Since $d^-(v)+d^+(v)\geq d^-(v)+\omega^+(v)\geq n/2$ and $d^-(w)+d^+(w)\geq d^-(w)+\omega^+(w)\geq n/2$, by Claim 3, we can see that $d^-(v)+d^+(v)=n/2$, $d^-(w)+d^+(w)=n/2$. This implies that every vertex in $D$ is adjacent to exactly $n/2$ vertices. We claim that for every $u\in V(D)$, $N^+(u)$ is an independent set. If not, then there is a component of $D[N^+(u)]$ containing at least two vertices. This implies that $\omega^+(u)<d^+(u)$ and $d^-(u)+\omega^+(u)<n/2$, a contradiction.

Now we claim that $v$ cannot be adjacent to one vertex $x\in X$ and one vertex $y\in Y$. Suppose not. If $\{x,y,v\}$ does not induce a directed triangle, then there is a vertex, say $x$, dominating the other two vertices. But in this case, $N^+(x)$ is not an independent set, a contradiction.

Without loss of generality, we assume that $v$ is not adjacent to any vertex in $Y$ and then adjacent to all the vertices in $X$. By Claim 3, $w$ is not adjacent to any vertex in $X$ and adjacent to all the vertices in $Y$. Thus $D$ is an orientation of $K_{n/2,n/2}$.

The proof is complete. {\hfill$\Box$}

{\bf Acknowledgements.} The authors are particularly grateful to two anonymous referees for their extensive comments that considerably improved the paper.

\medskip


\end{document}